\newtheorem{thm}{Theorem}[section]
\newtheorem{prop}[thm]{Proposition}
\newtheorem{lemma}[thm]{Lemma}
\newtheorem{cor}[thm]{Corollary}
\theoremstyle{definition}
\newtheorem{example}[thm]{Example}
\newtheorem{definition}[thm]{Definition}
\numberwithin{equation}{section}
\newcommand{\R}{\mathbb{R}}
\begin{document}
\title{Bregman Distances in Inverse Problems \\ and Partial Differential Equations}
\author{Martin Burger\thanks{Institut f\"ur Numerische und Angewandte Mathematik, Westf\"alische Wilhelms-Universit\"at (WWU) M\"unster. Einsteinstr. 62, D 48149 M\"unster, Germany. e-mail: martin.burger@wwu.de  } }
\maketitle
\begin{abstract}
The aim of this paper is to provide an overview of recent development related to Bregman distances outside its native areas of optimization and statistics. We discuss approaches in inverse problems and image processing based on Bregman distances, which have evolved to a standard tool in these fields in the last decade. Moreover, we discuss related issues in the analysis and numerical analysis of nonlinear partial differential equations with a variational structure. For such problems Bregman distances appear to be of similar importance, but are currently used only in a quite hidden fashion. We try to work out explicitely the aspects related to Bregman distances, which also lead to novel mathematical questions and may also stimulate further research in these areas.
%

\noindent {\bf Keywords: } Bregman Distances, Convexity, Duality, Error Estimates, Nonlinear Evolution Equations, Variational Regularization, Gradient Systems
%
\end{abstract}

\section{Introduction}

Bregman distances for (differentiable) convex functionals, originally introduced in the study of proximal algorithms in \cite{bregman} and named in \cite{censorlent}, are a well established concept in continuous and discrete optimization in finite dimension. A classical example is the celebrated Bregman projection algorithm for finding points in the intersection of affine subspaces (cf. e.g. \cite{censorzenios}). We refer to \cite{reid,censorzenios} for introductory and exhaustive views on Bregman distances in optimization. 

Although convex functionals play a role in many other branches of mathematics, e.g. in many variational problems and partial differential equations, the  suitability of Bregman distances in such fields was hardly investigated for several descades. In mathematical imaging and inverse problems the situation changed with the rediscovery and further development of Bregman iterations as an iterative image restoration technique in the case of frequently used regularization techniques such as total variation (cf. \cite{osh-bur-gol-xu-yin}), which led to significantly improved results compared to standard variational models and could eliminate systematic errors to a certain extent (cf. \cite{benninggroundstates,gilboa}). Another key observation increasing the interest in Bregman distances in these fields was that they can be employed for error estimation in particular for not strictly convex and nonsmooth functionals (cf. \cite{BuOs04}), which prevent norm estimates. 

Although there are many obvious links to the main route of research in Bregman distances and related optimization algorithms, there are several peculiar aspects that deserve particular discussion. Besides missing smoothness of the considered functionals and the fact that problems in imaging, inverse problems and partial differential equations are naturally formulated in infinite-dimensional Banach spaces such as the space of functions of bounded variation or Sobolev spaces, which have only been considered in few instances before, a key point is that the motivation for using Bregman distances in these fields often differs significantly from those in optimization and statistics. In the following we want to provide an overview of such questions and consequent developments, keeping an eye on potential directions and questions for future research. We start with a section including definitions, examples and some general properties of Bregman distances, before we survey aspects of Bregman distances in inverse problems and imaging developed in the last decade. Then we proceed to a discussion of Bregman distances in partial differential equations, which is less explicit and hence the main goal is to highlight hidden use of Bregman distances and make the idea more directly accessible for future research. Finally we conclude with a section on related recent developments.

\section{Bregman Distances and their Basic Properties}

We start with a definition of a Bregman distance. In the remainder of this paper, let $X$ be a Banach space and 
$J: X \rightarrow \R \cup \{+\infty\}$ be convex functionals. We first recall the definition of subdifferential respectively subgradients.

\begin{definition}
The subdifferential of a convex functional $J$ is defined by
\begin{equation}
	\partial J(u) = \{ p \in X^*~|~J(u) +  \langle p, v-u \rangle \leq J(v) \text{ for all } v \in X \} .
\end{equation}
An element $p \in \partial J(u)$ is called subgradient. 	
\end{definition}

Having defined a subdifferential we can proceed to the definition of Bregman distances, respectively generalized Bregman distances according to \cite{Kiwiel}

\begin{definition}
The (generalized) Bregman distance related to a convex functional $J$ with subgradient $p$ is defined by
\begin{equation}
	D_J^p(v,u) = J(v) - J(u) - \langle p, v- u \rangle,
\end{equation}
where $p \in \partial J(u)$. The symmetric Bregman distance is defined by
\begin{equation}
	D_J^{p,q}(u,v) = D_J^p(v,u) + D_J^q(u,v) = \langle p-q, u-v \rangle,
\end{equation}
where $p \in \partial J(u)$, $q \in \partial J(v)$.
\end{definition}

Note that in the differentiable case, i.e. $\partial J(u)$ being a singleton, we can omit the special subgradient and write $D_J(v,u)$ or $D_J^{J'(u)}(v,u)$.

By the definition of subgradients the nonnegativity is apparent:
\begin{prop}
Let $J$ be convex and $p \in \partial J(u)$. Then 
$$ D_J^p(v,u) \geq 0 \qquad \forall~v \in X $$
and 
$$D_J^p(u,u) = 0. $$ 
If $J$ is strictly convex, then $ D_J^p(v,u) > 0$ for $v \neq u$.
\end{prop}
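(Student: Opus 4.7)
The proof should be an almost immediate consequence of unwinding the definitions.

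\textbf{Step 1 (nonnegativity).} The plan is to quote the subgradient inequality for $p \in \partial J(u)$, which is precisely
$$ J(u) + \langle p, v-u \rangle \leq J(v) \qquad \text{for all } v \in X. $$
Rearranging reads $J(v) - J(u) - \langle p, v-u \rangle \geq 0$, which is exactly the definition of $D_J^p(v,u)$. So nonnegativity is immediate.

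\textbf{Step 2 (vanishing on the diagonal).} I would simply set $v = u$ in the defining formula; both $J(v) - J(u)$ and $\langle p, v-u \rangle$ vanish, giving $D_J^p(u,u) = 0$. No further argument is needed.

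\textbf{Step 3 (strict positivity under strict convexity).} This is the only part with any content. I would argue by contradiction: assume $J$ is strictly convex and that $D_J^p(v,u) = 0$ for some $v \neq u$; this is equivalent to the equality $J(v) = J(u) + \langle p, v-u \rangle$. For $t \in (0,1)$, set $w_t = t v + (1-t) u$. Applying the subgradient inequality at $u$ to the point $w_t$ yields
$$ J(w_t) \geq J(u) + \langle p, w_t - u \rangle = J(u) + t \langle p, v-u \rangle = t J(v) + (1-t) J(u), $$
where the last equality uses the assumed identity $\langle p, v-u\rangle = J(v) - J(u)$. On the other hand, convexity of $J$ gives the reverse inequality $J(w_t) \leq t J(v) + (1-t) J(u)$. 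Hence equality holds along the whole segment $[u,v]$, contradicting strict convexity. This forces $D_J^p(v,u) > 0$ whenever $v \neq u$.

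\textbf{Main obstacle.} There is really no serious obstacle; Steps 1 and 2 are definitional, and the only point that requires a small argument is Step 3, where one has to notice that vanishing of the Bregman distance forces $J$ to be affine along the segment joining $u$ and $v$, which is ruled out by strict convexity.
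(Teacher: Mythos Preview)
Your proof is correct and matches the paper's approach: the paper does not spell out a proof at all, merely remarking that ``by the definition of subgradients the nonnegativity is apparent,'' which is exactly your Step~1, with Steps~2 and~3 being the obvious completions. Your Step~3 argument (showing that $D_J^p(v,u)=0$ forces $J$ to be affine on the segment $[u,v]$, contradicting strict convexity) is the standard one and is in fact more detailed than anything the paper provides.
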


We can further characterize vanishing Bregman distances as sharing a subgradient:
\begin{prop}
Let $J$ be convex and $p \in \partial J(u)$. Then 
$ D_J^p(v,u) = 0 $
if and only if $p \in \partial J(v)$.
\end{prop}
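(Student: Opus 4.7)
The plan is to prove the equivalence directly from the definitions, treating the two implications separately and using only the subgradient inequality together with the previous nonnegativity proposition.

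For the forward direction, I would assume $D_J^p(v,u) = 0$, which is the identity $J(v) = J(u) + \langle p, v-u\rangle$. To show $p \in \partial J(v)$, I would take an arbitrary $w \in X$ and apply the subgradient inequality $J(w) \geq J(u) + \langle p, w-u\rangle$ coming from $p \in \partial J(u)$. Splitting $\langle p, w-u\rangle = \langle p, w-v\rangle + \langle p, v-u\rangle$ and substituting the assumed identity for $J(u) + \langle p, v-u\rangle = J(v)$ yields $J(w) \geq J(v) + \langle p, w-v\rangle$ for every $w$, which is precisely $p \in \partial J(v)$.

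For the backward direction, I would assume $p \in \partial J(v)$ and test its defining inequality against the point $u$: this gives $J(u) \geq J(v) + \langle p, u-v\rangle$, equivalently $J(v) - J(u) - \langle p, v-u \rangle \leq 0$, i.e.\ $D_J^p(v,u) \leq 0$. Combining with $D_J^p(v,u) \geq 0$ from the previous proposition (which uses $p \in \partial J(u)$), we conclude $D_J^p(v,u) = 0$.

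Neither direction is really the "hard part" — both reduce to manipulating the subgradient inequality, and the only mild subtlety is remembering to invoke nonnegativity (hence the hypothesis $p \in \partial J(u)$) in the backward direction, since the subgradient inequality at $v$ alone only yields one-sided information on $D_J^p(v,u)$. No compactness, smoothness, or strict convexity is needed, so the argument works verbatim in the generality stated.
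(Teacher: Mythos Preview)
Your argument is correct in both directions and is exactly the natural one. Note, however, that the paper states this proposition without proof; it is listed among the elementary properties in Section~2 and left to the reader. So there is no ``paper's own proof'' to compare against, but your write-up is precisely the straightforward verification one would expect, and nothing more is needed.
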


Since Bregman distances are convex with respect to the first argument, we can also compute a subdifferential with respect to that variable, which is simply a shift of the subdifferential of $J$:
\begin{prop}
Let $J$ be convex, $p \in \partial J(u)$. Then 
$$ \partial_v D_J^p(v,u) = \partial J(v) - p .$$
\end{prop}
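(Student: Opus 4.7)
The plan is to unwind the definition of the subdifferential for the map $v \mapsto D_J^p(v,u)$ with $u$ and $p$ held fixed, and observe that the subgradient inequality reduces, after cancellation, to a subgradient inequality for $J$ shifted by the fixed functional $-p$.

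Concretely, I would proceed as follows. Fix $v \in X$ and let $q \in X^*$. By definition, $q \in \partial_v D_J^p(v,u)$ if and only if
$$ D_J^p(w,u) \geq D_J^p(v,u) + \langle q, w-v \rangle \quad \text{for all } w \in X. $$
Substituting $D_J^p(w,u) = J(w) - J(u) - \langle p, w-u \rangle$ and similarly for $D_J^p(v,u)$, the constant $J(u)$ cancels on both sides and the terms $\langle p, u \rangle$ likewise cancel. What remains is
$$ J(w) - \langle p, w \rangle \geq J(v) - \langle p, v \rangle + \langle q, w - v \rangle, $$
which rearranges to $J(w) \geq J(v) + \langle p+q, w-v \rangle$ for all $w \in X$. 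This is precisely the statement that $p + q \in \partial J(v)$, i.e. $q \in \partial J(v) - p$. Since the chain of equivalences runs in both directions, we obtain $\partial_v D_J^p(v,u) = \partial J(v) - p$.

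I do not expect any real obstacle: the content is essentially the well-known fact that adding an affine (in fact linear, modulo a constant) functional to a convex function shifts its subdifferential by the derivative of that functional. The only point requiring some care is to remember that $J(u)$ and $\langle p, u\rangle$ play the role of constants with respect to $v$, so that they do not appear in the final subgradient inequality; once this is observed the argument is a direct computation from the definition of $\partial$.
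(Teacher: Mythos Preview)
Your argument is correct and complete: the subgradient inequality for $v \mapsto D_J^p(v,u)$ reduces, after cancelling the constants $J(u)$ and $\langle p,u\rangle$, exactly to the subgradient inequality for $J$ at $v$ with subgradient $p+q$, and the chain of equivalences gives both inclusions. The paper itself states this proposition without proof, so there is no alternative approach to compare against; your computation is the standard one.
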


Concerning existence proofs for variational problems involving Bregman distance it is often useful to investigate lower semicontinuity properties. Since Bregman distances can be considered as affinely linear perturbations of the functional $J$ it it natural that these properties carry over:
\begin{prop}
Let $J$ be convex and $q \in \partial J(v)$. Then the functional $H$ defined by
$$ H(u) =  D_J^q(u,v)  $$
is convex. Hence, if $X$ is reflexive, then $H$ is weakly lower semicontinuous. If $X$ is the dual of some Banach space $Z$ and $J$ is the convex conjugate of a functional  on $Z$, then $q \in Z$ implies that $H$ is lower semicontinuous in the weak star topology.
\end{prop}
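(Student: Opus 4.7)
The starting point is to decompose $H$ explicitly using the definition of the Bregman distance:
\begin{equation*}
H(u) = D_J^q(u,v) = J(u) - J(v) - \langle q, u-v \rangle.
\end{equation*}
Here $-J(v)$ is a constant and $u \mapsto -\langle q, u-v \rangle$ is an affine functional (continuous since $q \in X^*$), so $H$ differs from $J$ only by a continuous affine perturbation. Convexity of $H$ therefore follows immediately from convexity of $J$, since convexity is preserved under the addition of affine maps. This handles the first assertion.

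For the second assertion, I would invoke the standard principle (a consequence of Mazur's theorem and the separation of convex sets by continuous linear functionals) that on a reflexive Banach space $X$ a convex functional is weakly lower semicontinuous if and only if it is (strongly) lower semicontinuous. Under the (implicit, but standard) working hypothesis that $J$ itself is lower semicontinuous, the functional $H$ is the sum of the lower semicontinuous $J$ with a norm-continuous affine term, hence lower semicontinuous; combined with its convexity established above, this yields weak lower semicontinuity.

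For the weak-star statement, the main point is that if $J = F^*$ is the convex conjugate on $X = Z^*$ of some functional $F$ on $Z$, then by construction
\begin{equation*}
J(u) = \sup_{z \in Z} \bigl( \langle z, u \rangle - F(z) \bigr),
\end{equation*}
which is a supremum of weak-star continuous affine functionals, and is therefore automatically weak-star lower semicontinuous on $X$. The hypothesis $q \in Z$ is used exactly here: it ensures that the linear term $u \mapsto \langle q, u-v \rangle$ is not merely norm continuous but weak-star continuous (it is the evaluation of $u$ against a fixed element of the pre-dual). Thus $H$ is the sum of a weak-star lower semicontinuous function and a weak-star continuous affine function, hence weak-star lower semicontinuous.

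The only real subtlety, and the step worth flagging explicitly, is the last one: if one only assumed $q \in X^*$ rather than $q \in Z \subseteq X^* = Z^{**}$, then the linear functional $u \mapsto \langle q, u\rangle$ need not be weak-star continuous, so weak-star lower semicontinuity of $H$ could fail. Beyond checking this, the argument is a straightforward unpacking of convexity and of the defining property of conjugate functionals; no delicate estimate is required.
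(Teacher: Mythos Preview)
Your proof is correct and matches the paper's approach exactly. The paper does not give a formal proof of this proposition; it only offers the one-line remark preceding the statement, namely that ``Bregman distances can be considered as affinely linear perturbations of the functional $J$'', and leaves the rest to the reader. Your decomposition $H(u) = J(u) - J(v) - \langle q, u-v\rangle$ is precisely this observation, and the subsequent arguments (Mazur-type equivalence of weak and strong lower semicontinuity for convex functionals, weak-star lower semicontinuity of conjugate functionals as suprema of weak-star continuous affine maps, and the role of $q \in Z$ in making the linear term weak-star continuous) are exactly the standard details one would fill in.
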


\subsection{Examples of Bregman Distances}

In the following we provide several examples of Bregman distances as frequently found in literature as well as some that received recent attention. This shall provide further insights into the relation to other distance measures and the basic properties of Bregman distances:

\begin{example}Let $X$ be a Hilbert space and $J(u) = \frac{1}2 \Vert u \Vert_X^2$. Then $\partial J(u) = \{u\}$ and hence
\begin{equation}
	D_J^u(v,u) = \frac{1}2 \Vert u - v \Vert_X^2.
\end{equation}
\end{example} 

\begin{example}Let $I$ be a countable index set and $X=\ell^1(I)$ with
$$ J(u)  =  \Vert u \Vert_{\ell^1} = \sum_{i \in I} \vert u_i \vert. $$
Then the Bregman distance is given by
\begin{equation}
	D_J^p(v,u) = \sum_{i \in I} (q_i - p_i) v_i = \sum_{i,v_i > 0} (1-p_i)\vert v_i \vert + \sum_{i,v_i < 0} (1+p_i)\vert v_i \vert.
\end{equation}
Note that the above sums have nonzero entries only if the sign of $u_i$ does not match the sign of $v_i$, since $p_i=1$ if $u_i > 0$ and $p_i=-1$ if $u_i < 0$. 
\end{example}

\begin{example}Let $X=\ell^1_+(\{1,\ldots,N\})$ with
$$ J(u)  =  \sum_{i=1}^N u_i \log u_i + 1 - u_i, $$
which is called the logarithmic entropy (or Boltzmann entropy). 
Then the Bregman distance is given by
\begin{equation}
	D_J^p(v,u) = \sum_{i=1}^N   v_i \log \frac{v_i}{u_i} + u_i - v_i,
\end{equation}
which is known as Kullback-Leibler divergence. 
An analogous treatment applies to $X=L^1_+(\Omega)$, for a bounded domain $\Omega$, and the continuous version 
$$ J(u)  =  \int_\Omega \left( u(x) \log u(x) + 1 - u(x) \right)~dx, $$
resulting in the Bregman distance
\begin{equation}
	D_J^p(v,u) = \int_\Omega \left( v(x) \log \frac{v(x)}{u(x)}+ u(x) - v(x) \right)~dx.
\end{equation}
\end{example}

\subsection{Bregman Distances and Duality}

Duality is a basic ingredient in convex optimization (cf. \cite{ekelandtemam}) and hence it is also interesting to understand some connections 
of duality and Bregman distances. For this sake we employ the convex conjugate (also called Legendre-Fenchel transform) of a functional $J$ given by $J^*: X^* \rightarrow \R \cup \{+\infty\}$ satisfying
\begin{equation}
	J^*(p) = \sup_{u \in X} \left( \langle p, u \rangle - J(u) \right).
\end{equation}

Noticing that for $p \in \partial J(u)$ we have $J^*(p)=\langle p, u \rangle - J(u)$ one can immediately rewrite the Bregman distance as
\begin{equation}
	D_J^p(v,u) = J(v) + J^*(p) - \langle p,v \rangle,
\end{equation}
which can be interpreted as measuring the deviation of $p$ from being a subgradient in $\partial J(v)$ or the deviation of $v$ from being a subgradient in $\partial J^*(p)$.

A key identity relates Bregman distances with respect to $J$ to those with respect to the convex conjugate $J^*$:
\begin{prop}
Let $p \in \partial J(u)$ and $ q \in  \partial J(v)$. Then
\begin{equation}
	D_J^p(v,u) = D_{J^*}^v(p,q). \label{Bregmandistanceduality}
\end{equation}
\end{prop}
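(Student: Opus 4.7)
The plan is to compute both sides explicitly using the Fenchel--Young equality, which is the identity that characterizes subgradients and which the excerpt has already employed in rewriting $D_J^p(v,u) = J(v) + J^*(p) - \langle p, v\rangle$.

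First I would apply the Fenchel--Young equality twice. Since $p \in \partial J(u)$, we have $J(u) + J^*(p) = \langle p, u\rangle$, and since $q \in \partial J(v)$, we have $J(v) + J^*(q) = \langle q, v\rangle$. Substituting the first identity into
\begin{equation*}
D_J^p(v,u) = J(v) - J(u) - \langle p, v-u\rangle
\end{equation*}
eliminates $u$ and yields $D_J^p(v,u) = J(v) + J^*(p) - \langle p, v\rangle$, exactly the form already noted in the excerpt.

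Next I would verify that the right-hand side is a well-defined Bregman distance for $J^*$. For this one needs $v \in \partial J^*(q)$; this is the Legendre--Fenchel reciprocity $q \in \partial J(v) \Longleftrightarrow v \in \partial J^*(q)$, which holds for proper, convex, lower semicontinuous $J$ thanks to $J^{**}=J$. Granting this, the definition of $D_{J^*}^v$ with base point $q$ and subgradient $v \in \partial J^*(q)$ gives
\begin{equation*}
D_{J^*}^v(p,q) = J^*(p) - J^*(q) - \langle p-q, v\rangle.
\end{equation*}
Using the second Fenchel--Young equality $J^*(q) = \langle q, v\rangle - J(v)$ to eliminate $J^*(q)$, the cross term $\langle q, v\rangle$ cancels and one is left with $D_{J^*}^v(p,q) = J^*(p) + J(v) - \langle p, v\rangle$, which coincides with the expression for $D_J^p(v,u)$ obtained above. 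Comparing the two establishes \eqref{Bregmandistanceduality}.

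The only real obstacle is the reciprocity of subgradients between $J$ and $J^*$, i.e. ensuring that $v \in \partial J^*(q)$ so that the right-hand side is a legitimate Bregman distance with a proper subgradient; this uses lower semicontinuity of $J$ (so that $J^{**}=J$), a hypothesis tacitly present throughout this section. Once that is in place, the identity itself reduces to a straightforward algebraic manipulation driven entirely by the Fenchel--Young equality.
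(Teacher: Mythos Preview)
Your proof is correct and follows essentially the same route as the paper: both arguments hinge on the two Fenchel--Young equalities $J(u)+J^*(p)=\langle p,u\rangle$ and $J(v)+J^*(q)=\langle q,v\rangle$ together with the reciprocity $q\in\partial J(v)\Rightarrow v\in\partial J^*(q)$. The only cosmetic difference is that the paper manipulates the left-hand side directly into the right-hand side, whereas you reduce each side separately to the common expression $J(v)+J^*(p)-\langle p,v\rangle$ and then compare.
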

\begin{proof}
By simple reordering we find
\begin{eqnarray*}
	D_J^p(v,u) &=& J(v) - \langle p, v\rangle + \langle p, u \rangle - J(u) \\
	&=& J(v) - \langle p, v\rangle + J^*(p),
\end{eqnarray*}
where we have used the maximality relation for the convex conjugate, which is equivalent to $p \in \partial J(u)$. With analogous reasoning we find
$J^*(q) = \langle q,v\rangle - J(v)$ and hence 
\begin{eqnarray*}
	D_J^p(v,u) 	&=& J(v)  + J^*(p) -J^*(q) - \langle p-q, v\rangle = D_{J^*}^v(p,q), 
\end{eqnarray*}	
noticing that $q \in \partial J(v)$ implies $v \in \partial J^*(q)$.
\end{proof}

A second aspect of duality related to Bregman distance is the convex conjugate of the latter, which shows that Bregman distances are dual to measuring differences via a functional:
\begin{prop}
Let $ q \in  \partial J(v)$ and $H$ be defined by
\begin{equation}
 H(u) = 	D_J^q(u,v) .
\end{equation}
Then 
\begin{equation}
	H^*(p) = J^*(p+q)-J^*(q).
\end{equation}
\end{prop}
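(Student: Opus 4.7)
The plan is to work directly from the definition of the convex conjugate applied to $H$. First I would expand $H(u)$ as $J(u) - J(v) - \langle q, u-v\rangle$, which makes the dependence on $u$ explicit and isolates the linear term. Then I would write
\begin{equation*}
H^*(p) = \sup_{u \in X} \bigl( \langle p, u \rangle - H(u) \bigr) = \sup_{u \in X} \bigl( \langle p+q, u\rangle - J(u) \bigr) + J(v) - \langle q, v\rangle,
\end{equation*}
pulling the $u$-independent constants $J(v)$ and $\langle q,v\rangle$ outside the supremum and grouping the linear terms in $u$ into $\langle p+q, u\rangle$.

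The first term on the right is by definition $J^*(p+q)$. For the remaining constant, I would invoke the maximality relation already used in the preceding proposition: since $q \in \partial J(v)$, we have $J^*(q) = \langle q, v\rangle - J(v)$, i.e. $J(v) - \langle q, v\rangle = -J^*(q)$. Substituting this yields the claimed identity $H^*(p) = J^*(p+q) - J^*(q)$.

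There is no real obstacle here; the argument is a one-line manipulation once the definition is unfolded. The only subtle point worth pausing on is the use of the Fenchel equality $J^*(q) + J(v) = \langle q, v\rangle$, which holds precisely because $q \in \partial J(v)$ and is what makes the constant term collapse neatly. No regularity, reflexivity, or lower semicontinuity assumption on $J$ is needed beyond convexity and the existence of the subgradient $q$ at $v$.
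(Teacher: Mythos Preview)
Your proof is correct and follows essentially the same approach as the paper: expand the definition of $H^*$, regroup the linear terms in $u$ to recognize $J^*(p+q)$, and then use the Fenchel equality $J^*(q) = \langle q, v\rangle - J(v)$ (valid since $q \in \partial J(v)$) to identify the remaining constant as $-J^*(q)$.
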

\begin{proof}
We have
\begin{eqnarray*}
	H^*(p)	&=& \sup_u \left[ \langle p, u \rangle - J(u) + J(v) - \langle q, v - u\rangle \right] \\
	&=& \sup_u \left[ \langle p+q, u \rangle - J(u) \right] - \left[\langle q, v \rangle -J(v) \right] .
\end{eqnarray*}	
The first term equals $J^*(p+q)$ by definition and the second equals $J^*(q)$ since $q \in \partial J(v)$.
\end{proof}

\subsection{Bregman Distances and Fenchel duality} \label{combinationsection} 

In the following we further investigate some properties of Bregman distances for a combination of two convex functionals $F: X \rightarrow \R \cup \{+\infty\}$, $G: Y \rightarrow \R \cup \{+\infty\}$. The  classical setting is related to Fenchel's duality theorem (cf. \cite{ekelandtemam}), where
\begin{equation}
	J(u) := F(u) + G(Ku)  \label{fenchelsetting}
\end{equation}
with $K:X \rightarrow Y$ is a bounded linear operator between Banach spaces. 
The Fenchel duality theorem shows that under suitable conditions 
\begin{equation}
	\inf_u J(u) = \sup_w \left[ F^*(-K^* w) + G^*(w) \right],
\end{equation}
together with equations relating optimal solutions $\hat u$ and $\hat w$ via subdifferentials of the involved functionals
\begin{equation}
	-K^*\hat w \in \partial F(\hat u), \qquad K\hat u \in \partial G^*(w).
\end{equation}
 The above duality opens the possibility to employ Bregman distances on the dual problem as well as on the primal, which is nicely complemented by the duality relations for Bregman distances of a functional and its convex conjugate.

In the following we derive a basic estimates for the variational problem  \eqref{fenchelsetting}, which clarifies the relation of perturbations of one functional with duality and Bregman distances. We shall assume that the regularity of $F$ and $G$ is such that
$$ \partial J(u) = \partial F(u) +  K^* \partial G(Ku) $$
and the Fenchel duality theorem holds (cf. \cite{ekelandtemam} for details). 

Then we obtain the following estimate for perturbations of $J$:
\begin{thm}
Let $F$, $G$ and $K$ be as above, and let $\tilde G$ be a perturbation of $G$ satisfying the same assumptions. Let $u \in X$ be a minimizer of $J$ with $-K^*w \in \partial F(u)$ and  $\tilde u$ be a minimizer of $F(\cdot) + \tilde G(K \cdot  )$ with $-K^*\tilde w \in \partial F(\tilde u)$. Then 
\begin{equation}
	D_F^{-K^*w,-K^*\tilde w}(u,\tilde u) \leq G^*(\tilde w)- G^*(w)+ \tilde G^*(w) - \tilde G^*(\tilde w). \label{GtildeG}
\end{equation}
\end{thm}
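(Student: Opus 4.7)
The plan is to exploit the explicit form of the symmetric Bregman distance together with the primal-dual optimality relations coming from Fenchel's duality theorem.

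First I would write the left-hand side out using the definition of the symmetric Bregman distance:
\begin{equation*}
 D_F^{-K^*w,-K^*\tilde w}(u,\tilde u) = \langle -K^*w - (-K^*\tilde w),\, u-\tilde u\rangle = \langle \tilde w - w,\, Ku - K\tilde u\rangle.
\end{equation*}
Splitting the right-hand side gives
\begin{equation*}
 D_F^{-K^*w,-K^*\tilde w}(u,\tilde u) = \langle Ku,\, \tilde w - w\rangle - \langle K\tilde u,\, \tilde w - w\rangle,
\end{equation*}
so the task reduces to bounding each of these two inner products separately.

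Next I would extract the dual optimality conditions. By the Fenchel duality theorem applied to the original problem, $-K^*w \in \partial F(u)$ is paired with $Ku \in \partial G^*(w)$; applied to the perturbed problem, $-K^*\tilde w \in \partial F(\tilde u)$ is paired with $K\tilde u \in \partial \tilde G^*(\tilde w)$. The subgradient inequality for $G^*$ at $w$ (with subgradient $Ku$) evaluated at $\tilde w$ yields
\begin{equation*}
 \langle Ku,\, \tilde w - w\rangle \leq G^*(\tilde w) - G^*(w),
\end{equation*}
and the subgradient inequality for $\tilde G^*$ at $\tilde w$ (with subgradient $K\tilde u$) evaluated at $w$ yields
\begin{equation*}
 \langle K\tilde u,\, w - \tilde w\rangle \leq \tilde G^*(w) - \tilde G^*(\tilde w),
\end{equation*}
i.e.\ $-\langle K\tilde u,\, \tilde w - w\rangle \leq \tilde G^*(w) - \tilde G^*(\tilde w)$.

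Adding these two estimates gives exactly the bound \eqref{GtildeG}. The only step that requires some care is invoking Fenchel duality to convert the primal subgradient relation $-K^*w \in \partial F(u)$ into $Ku \in \partial G^*(w)$ (and similarly for the tilded quantities); this is precisely where the regularity hypothesis on $F$, $G$ ensuring validity of Fenchel's theorem and $\partial J = \partial F + K^*\partial G(K\cdot)$ is used. Once that is in place, the rest is just two applications of the definition of a subgradient.
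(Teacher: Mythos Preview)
Your proof is correct and follows essentially the same route as the paper: expand the symmetric Bregman distance as $\langle Ku,\tilde w - w\rangle + \langle K\tilde u, w - \tilde w\rangle$, invoke Fenchel duality to obtain $Ku\in\partial G^*(w)$ and $K\tilde u\in\partial\tilde G^*(\tilde w)$, and then apply the subgradient inequality to each term. You spell out the two subgradient inequalities more explicitly than the paper does, but the argument is the same.
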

\begin{proof}
We have
\begin{eqnarray*}
D_F^{-K^*w,-K^*\tilde w}(u,\tilde u) & =& \langle K^* \tilde w - K^* w, u - \tilde u\rangle \\
&=& \langle Ku, \tilde w - w \rangle + \langle K \tilde u, w - \tilde w  \rangle. 
\end{eqnarray*} 
By the Fenchel duality theorem we have $Ku \in \partial G^*(w)$ and $K\tilde u \in \partial G^*(\tilde w)$, which implies the assertion by inserting the subgradient inequality.
\end{proof}

\subsection{Bregman Distances for One-homogeneous Functionals} \label{onehomogeneoussection}

The case of convex one-homogeneous functionals $J$, i.e.
\begin{equation}
	J(t u) = |t| J(u) \qquad \forall~t \in \R,
\end{equation}
received strong attention recently, and also appears to be a particularly interesting one with respect to Bregman distances. In the one-homogeneous case one has 
\begin{equation}
	J(u) = \langle p, u \rangle 
\end{equation}
for $p \in \partial J(u)$. Thus, the Bregman distance simply reduces to
\begin{equation}
	D_J^p(v,u) = J(v) - \langle p, v \rangle. \label{Bregmanonehomogeneous}
\end{equation}

An interesting property in the one-homogeneous case is the fact that the convex conjugate is the indicator function of a convex set $C$, i.e. ,
\begin{equation}
	J^*(p) =  \left\{ \begin{array}{ll}  0 & \text{if } p \in C \\ + \infty & \text{else}. \end{array} \right.
\end{equation}
This sheds interesting light on \eqref{Bregmandistanceduality}, noticing that $p \in \partial J(u)$ implies $p \in C$. Hence, 
$$ D_J^p(v,u) = D_{J^*}^v(p,q) = \langle q-p,v \rangle. $$
An alternative way to see this property is \eqref{Bregmanonehomogeneous} combined with 
$ \langle q,v \rangle = J(v)$.

In the one-homogeneous case we immediately find an example of Bregman distances vanishing for $v \neq u$. Let $t > 0$ and $v=tu$, then $\partial J(v) = \partial J(u)$ implies
$D_J^p(v,u) = 0$. On the other hand we observe that the Bregman distance distinguishes different orientation. Choosing $v=tu$ for $t < 0$ we have $\partial J(v) = - \partial J(u)$, hence $D_J^p(v,u) = 2 J(v)$.


\section{Applications in Inverse Problems and Imaging}

In the last decade, Bregman distances have become an important tool in inverse problems and image processing. Their main use is twofold: On the one hand they are of particular importance for all kinds of error estimates as already sketched above and in particular they are quite useful for the analysis of variational regularization techniques with nondifferentiable regularization functionals. This route has been initiated in \cite{BuOs04} and subsequently expanded e.g. in \cite{benning,bur07,flemming,grasmair1,grasmair2,hein,poeschl,resmerita,wernerhohage}. On the other hand Bregman distances can be used to construct novel iterative techniques with superior properties compared to classical variational regularization. This route goes back to \cite{osh-bur-gol-xu-yin} and was developed further e.g. in \cite{primaldualbregman,gilboa,cai1,splitbregman,moel12,Xu07,yin,zhang}, the methods also had a huge impact on various applications (cf. e.g. \cite{choi,mueller,primaldualbregman}).

The basic setup we consider is the solution of a problem of the form $K u = f$, where $K:X  \rightarrow Y$ is a bounded linear operator between Banach spaces and $f$ are given data. Since in most cases $K$ does not have a closed range (or is even a compact operator) and data contain measurement errors, this problem can be ill-posed. To cure this issue variational regularization methods employ a convex regularization functional $R:X \rightarrow \R \cup \{+\infty\}$, which introduces the a-priori knowledge that reasonable approximations of the  solution $u$ have small (minimal) values $R(u)$. Variational regularization methods make a compromise between approximating the data $f$ and minimizing $R$ and solve a problem of the form
\begin{equation}
	D(Ku,f) + \alpha R(u) \rightarrow \min_{u \in X}, \label{variationalregularization}
\end{equation}
where $D: Y \times Y \rightarrow \R$ is an appropriate distance measure and $\alpha > 0$ is a regularization parameter to be chosen appropriately in dependence of the measurement error (often refered to as data noise). Specific forms of the distance measure are derived e.g. via statistical modelling as the negative log-likelihood of the data noise. Frequently $D$ is simply a least-squares term, i.e. $Y$ is a Hilbert space and
\begin{equation}
	D(Ku,f) = \frac{1}2 \Vert Ku  -f \Vert_Y^2. 
\end{equation}
A classical example is the ROF-model for image denoising \cite{rof}, where $R$ is the total variation seminorm, $K$ is an embedding from $BV(\Omega)\cap L^2(\Omega)$ into $L^2(\Omega)$, and $D$ the squared $L^2$-norm. For the whole section we shall assume that $D$ is convex with respect to the first variable, which is the case for almost all commonly used examples.

\subsection{Error Estimates} 

Error estimates for solutions of  \eqref{variationalregularization} are of interest with respect to two quantities: First of all, the distance of the data $f$ to the ideal data $Ku^*$, where $u^*$ is the unknown ideal solution. This part is refered to as data error or noise. Secondly, the regularization parameter $\alpha$, which should be equal zero in the case of ideal data and introduces a systematic error in the case of perturbed data (when it needs to be positive).
In the setting of \eqref{fenchelsetting} we thus need to choose
\begin{equation}
	F(u) = \alpha R(u), \qquad G(Ku) = D(Ku,f).
\end{equation}
The optimality conditions for a minimizer $u_\alpha$ are then of the form
\begin{equation}
	   p_\alpha = K^* w_\alpha , \qquad p_\alpha \in \partial R(u_\alpha) \qquad - \alpha K^*w_\alpha 
	\in \partial D(Ku_\alpha,f),
\end{equation}
where the subgradient of $D$ is  meant to be computed with respect to the first argument for fixed $f$.

In order to obtain error estimates for some different data $\tilde f$ we choose $\tilde G(Ku) =  D(Ku,\tilde f)$
and denote by $\tilde u_\alpha$ its corresponding regularized solution with
$$   \tilde p_\alpha = K^* \tilde w_\alpha , \qquad \tilde p_\alpha \in \partial R(\tilde u_\alpha) .$$
Then \eqref{GtildeG} yields
\begin{equation}
		\alpha D_R^{ K^*w_\alpha,K^*\tilde w_\alpha}(u,\tilde u) \leq G^*( \tilde w_\alpha)- G^*( w_\alpha)+ \tilde G^*( w_\alpha) - \tilde G^*( \tilde w_\alpha).
\end{equation}
To further illustrate the behaviour consider the case of a quadratic data fidelity
\begin{equation}
	G(Ku) = D(Ku,f) = \frac{1}2 \Vert Ku - f\Vert^2 \label{Dquadratic},
\end{equation}
for some squared Hilbert space norm, 
which yields $G^*(w) = \frac{1}2 \Vert w \Vert^2 + \langle w, f \rangle$. Hence, 
\begin{equation}
		\alpha D_R^{ K^*w_\alpha,K^*\tilde w_\alpha}(u,\tilde u) \leq \langle f - \tilde f, \tilde w_\alpha - w_\alpha \rangle.
\end{equation}

In the case \eqref{Dquadratic} one can see quite immediately why the (symmetric) Bregman distance is an appropriate error measure for the estimates. Starting with the optimality conditions
\begin{align*}
K u_\alpha - f + \alpha w_\alpha &= 0, &p_\alpha = K^*w_\alpha \in R(u_\alpha), \\
K \tilde u_\alpha - f + \alpha \tilde w_\alpha &= 0, &\tilde p_\alpha = K^* \tilde w_\alpha \in R(\tilde u_\alpha) ,
\end{align*}
we find
\begin{equation}
	 K(u_\alpha - \tilde u_\alpha) + \alpha (w_\alpha-\tilde w_\alpha) = f -f^*. \label{optimalitydifference}
\end{equation}
The right-hand side is exactly the perturbation of the data, whose norm we want to use to estimate errors in the solution $u_\alpha$. Hence we simply take the squared norm on both sides and obtain by expanding on the left-hand side
$$	\Vert K (u_\alpha - \tilde u_\alpha) \Vert^2 + 2 \alpha \langle  w_\alpha - \tilde w_\alpha , K (u_\alpha - \tilde u_\alpha) \rangle + \alpha^2 \Vert w_\alpha - \tilde w_\alpha \Vert^2 = \Vert f - \tilde f \Vert^2. $$
Finally using $K^* w_\alpha = p_\alpha$ we arrive at
\begin{equation}
	\Vert K (u_\alpha - \tilde u_\alpha) \Vert^2 + 2 \alpha D_R^{p_\alpha, \tilde p_\alpha} (u_\alpha, \tilde u_\alpha)   + \alpha^2 \Vert w_\alpha - \tilde w_\alpha \Vert^2 = \Vert f - \tilde f \Vert^2 , \label{Bregmanequality}
	\end{equation}
which implies (by the nonnegativity of all involved terms) the immediate estimate 
\begin{equation}
D_R^{p_\alpha, \tilde p_\alpha} (u_\alpha, \tilde u_\alpha)   \leq \frac{1}{2\alpha} \Vert f - \tilde f \Vert^2
	\end{equation}
for the Bregman distance. Note that \eqref{Bregmanequality} is not just an estimate, but indeed an equality for three error terms - the error in the image of the operator $K$ (somehow the residual), the error in the dual variables $w$, and the Bregman distance of solutions. Here $Ku$ and $w$ are elements of a Hilbert space and it is of course natural to measure their deviations in the corresponding norm, so \eqref{Bregmanequality} yields the Bregman distance as the naturally induced error measure in the Banach space $X$.

Having obtained \eqref{Bregmanequality} it is interesting to note that one can alternatively obtain estimates for two parts of the right-hand side by taking scalar products of \eqref{optimalitydifference} with appropriate elements and subsequent application of the Cauchy-Schwarz respectively Young's inequality. The first is obtained by a scalar product with $Ku_\alpha - Ku^*$, which yields
$$ \Vert K (u_\alpha - \tilde u_\alpha) \Vert^2 +   \alpha D_R^{p_\alpha, \tilde p_\alpha} (u_\alpha, \tilde u_\alpha) = \langle f - \tilde f, K (u_\alpha - \tilde u_\alpha)  \rangle \leq \frac{1}2 \Vert f - \tilde f \Vert^2 + \frac{1}2 \Vert K (u_\alpha - \tilde u_\alpha) \Vert^2,$$
hence 
\begin{equation}
	\Vert K (u_\alpha - \tilde u_\alpha) \Vert^2 + 2 \alpha D_R^{p_\alpha, \tilde p_\alpha} (u_\alpha, \tilde u_\alpha)   \leq \Vert f - \tilde f \Vert^2.
\end{equation}
Using analogous reasoning, a scalar product of \eqref{optimalitydifference} with $w_\alpha - \tilde w_\alpha$ leads to 
\begin{equation}
	 2 \alpha D_R^{p_\alpha, \tilde p_\alpha} (u_\alpha, \tilde u_\alpha)   + \alpha^2 \Vert w_\alpha - \tilde w_\alpha \Vert^2  \leq \Vert f - \tilde f \Vert^2.
\end{equation}

\subsection{Asymptotics}

A key question in inverse problems is the behaviour of the regularized solution $u_\alpha$ as $\alpha \rightarrow 0$, which only makes sense if the noise in the data vanishes, i.e. $f= Ku^*$ for some desired solution $u^* \in X$. It is well-known that for ill-posed problems the convergen.ce can be arbitrarily slow as $\alpha \rightarrow$ without further conditions on the desired solution $u^*$. For a further characterization it is important to note that under appropriate choice of $\alpha$ a limiting solution $u^*$ of the variational model \eqref{variationalregularization} satisfies
\begin{equation}
	R(u) \rightarrow \min_{u \in X} \quad \text{subject to } K u = Ku^*. \label{constrainedproblem}
\end{equation}
This can be seen from the estimate 
$$ D(Ku_\alpha,f) + \alpha R(u_\alpha) \leq D(Ku^*,f) + \alpha R(u^*). $$
Using $\alpha \rightarrow 0$ and $D(Ku^*,f) \rightarrow 0$ we see that $D(Ku_\alpha,f) \rightarrow 0$, 
hence the limit is a solution of $Ku^*=f$. Dividing by $\alpha$ and using nonnegativity of $D$, we find
$$ R(u_\alpha) \leq R(u^*) + \frac{D(Ku^*,f)}\alpha, $$
and under the standard condition on the parameter choice 
$$\frac{D(Ku^*,f)}\alpha \rightarrow 0, $$
we observe that the limit of $u_\alpha$ cannot have a larger value of $R$ than any other solution of $Ku=f$, i.e. it solves \eqref{constrainedproblem}.

 The key observation in \cite{BuOs04,chaventkunisch} is that appropriate conditions in the case of variational regularization is related to the existence of a Lagrange multiplier for \eqref{constrainedproblem}. The Lagrange functional is given by $L(u,w) = R(u) - \langle w, Ku - Ku^* \rangle$, hence the existence of a Lagrange multiplier is the so-called {\em source condition}
\begin{equation}
p^* = K^* w^* \in \partial R(u^*).
	\label{sourcecondition}
\end{equation}
Let us again detail the arguments in the case \eqref{Dquadratic}, where we can indeed use the above error estimates like \eqref{Dquadratic} with $\tilde u_\alpha = u^*$ and $\tilde w_\alpha = w^*$. In order to obtain $u_\alpha$ as the solution of a variational problem we can indeed choose $\tilde f = Ku^* + \alpha w^*$ (note  that \eqref{sourcecondition} is equivalent to the existence of some $\tilde f$ such that $u^*$ solves the variational problem with data $\tilde f$, cf. \cite{BuOs04}). Hence, \eqref{Bregmanequality} becomes
\begin{equation}
	\Vert K (u_\alpha - u^*) \Vert^2 + 2 \alpha D_R^{p_\alpha,p^*} (u_\alpha,u^*)   + \alpha^2 \Vert w_\alpha - w^* \Vert^2 = \Vert f -Ku^* - \alpha w^* \Vert^2 .
	\end{equation}
Again with Young's inequality we end up at
\begin{equation}
	 D_R^{p_\alpha,p^*} (u_\alpha,u^*) \leq \frac{\Vert f -Ku^*   \Vert^2}\alpha + \alpha \Vert w^* \Vert^2,
\end{equation}
which gives the usual optimal choice  $\alpha \sim \Vert f -Ku^*   \Vert$ of regularization parameter in terms of the noise level, exactly as in the linear Hilbert space case (cf. \cite{EnHaNe96}).

\subsection{Bregman Iterations and Inverse Scale Space Methods}

A frequent observation made for variational methods as discussed above is a systematic bias, in particular the methods yield solutions $u_\alpha$ with $R(u_\alpha)$ too small, which e.g. results into a local loss of contrast in the case of total variation regularization (the constrast loss is larger for smaller structures). In order to cure such systematic errors in particular in the case of one-homogeneous regularization it turned out that the well-known Bregman iteration is a perfect tool. Instead of solving the variational problem only once one usually starts at $u_0$ being a minimizer of the regularization functional $R$, i.e. at the coarsest scale (if one agrees that scale is defined by $R$). Then of course $p_0 = 0 \in \partial R(u_0)$ and one can subsequently iterate
\begin{equation}
	u_{k+1} \in \text{arg}\min_{u \in X} \left( D(Ku,f) + \alpha D_R^{p_k}(u,u_k) \right),
\end{equation}
where the subgradient $p_k$ is updated via the optimality condition
\begin{equation}
	p_{k+1} - p_k \in - \frac{1}\alpha K^* \partial D(Ku_k,f).
\end{equation}
Noticing that we can again write $p_k =  K^* w_k$ one can also construct an iteration
\begin{equation}
	w_{k+1} - w_k \in - \frac{1}\alpha \partial D(Ku_k,f),
\end{equation}
from which one can derive the well-kown equivalence to augmented Lagrangian methods for minimizing $R$ subject to $Ku = f$. 

The convergence analysis in the case $f=Ku^*$ follows the well-known route for the Bregman iteration, but due to the ill-posedness of $Ku=f$ there is a particularly interesting aspect in the case of noisy data $f$ differing from the ideal $Ku^*$. If the range of $K$ is not closed, one has to take care of the situation where neither a solution $Ku=f$ nor some kind of least squares solution (a minimizer of $D(Ku,f)$) exists in $X$. Hence, the Bregman iteration has the role of an iterative regularization method and needs to be stopped appropriately before the noise effects start to deteriorate the quality of the solution. Indeed one can show that the Bregman distance $D^{p_k}(u^*,u_k)$ is decreasing during the first iterations up to a certain point when the residual $D(Ku^k,f)$ becomes too small (i.e. one approximates the noisy data stronger than $Ku^*$). Successful stopping criteria as the discrepancy principle are indeed based on comparing the residual with an estimate of the noise $D(Ku^*,f)$ and stop when $D(u^k,f)$ drops below this estimate. 

In imaging a particularly interesting and quite related aspect of Bregman iterations is the scale behaviour. As mentioned above, with scale defined as above by properties of the regularization functional $R$, the Bregman iteration inserts finer and finer scales during its progress. In order not to miss certain scales it is obviously interesting to make small enough steps, which amounts to choosing $\alpha$ sufficiently large. For the limit of $\alpha \rightarrow \infty$ one can interpret the iteration as a backward Euler discretization (with timestep $\frac{1}\alpha$) of a flow, which has been called inverse scale space method by a reminiscence to so-called scale space methods in image processing, which exhibit the opposite scale behaviour (cf. \cite{scalespace,scherzergroetsch}). The inverse scale space flow is a solution of the differential inclusion
\begin{equation}
	\partial_t p(t) \in -K^* \partial D(Ku(t),f)  , \qquad p(t) \in \partial J(u(t)),
\end{equation}
with initial value $u(0)=u_0$ such that $p(0)=0 \in \partial R(u_0)$. It can be interpreted a gradient flow for the subgradient $p$ on a dual functional (cf. \cite{inversetvflow}) or as a doubly nonlinear evolution equation. For the latter we will give an explanation on the analysis in terms of Bregman distances related to the involved functionals in the next section, which is also the appropriate way to analyze the inverse scale space method.

An unexpected result is the behaviour of the inverse scale space flow for polyhedral functions such as the $\ell^1$-norm. Roughly speaking the polyhedral case means that for any $u \in X$ a subdifferential $\partial R(u)$ can be obtained via convex combinations of a finite number of elements (independent of $u$). It has been shown (cf. \cite{adaptive1,adaptive2}) that in such cases and $D(Ku,f) = \frac{1}2 \Vert Ku - f\Vert^2$ the dynamics of the solution $u(t)$ is piecewise constant in time, i.e. quite far from a continuous flow, while the dynamics of the subgradients $p(t)$ is piecewise linear in time. Interestingly, the time steps $t_k$ at which the solution changes can be computed explicitely, and the value of $u(t_k)$ is obtained by minimizing 
$$\Vert Ku - f\Vert^2 \quad \text{ subject to } \text  p(t_k) \in \partial R(u). $$ This is particularly attractive in the case of sparse optimization with $R$ being the $\ell^1$-norm, since the condition $p(t_k) \in \partial R(u)$ defines the sign of $u$ and in particular the set of zeros. This means that the the least-squares problems have to be solved on a rather small support, which is highly attractive for computational purposes (cf. \cite{adaptive1}). Let us briefly explain the behaviour for $R: \R^N \rightarrow \R^+$ being the $\ell^1$-norm and some arbitrary differentiable functional $G$ on the right-hand side, i.e.,
\begin{equation}
	\partial_t p_i(t) = - \partial_{u_i} G(u(t)). 
\end{equation}
In this case the subdifferential is the multivalued sign of $u_i(t))$ and for $u_0=p_0=0$ we obviously find $u_i(t) = 0$ for sufficiently small time since $|p_i(t)|<1$, which holds for all $i$. Hence for $t< t_1$ with $t_1$ to be determined we find
\begin{equation}
	\partial_t p_i(t) = - \partial_{u_i} G(0), 
\end{equation}
which can be integrated easily to
\begin{equation}
	p_i(t_1) = - t_1 \partial_{u_i} G(0). 
\end{equation}
The key observation is that $u_i \neq 0$ for some $i$ is only possible if $|p_i(t_1)| = 1$. This implies that the first time with possibly nonzero $u$ is 
\begin{equation}
	t_1 = \frac{1}{\Vert \partial G(0) \Vert_\infty}.
\end{equation}
At time $t_1$ the sign of all $u_i$ is determined by $p_i(t_1)$ and one can check that a solution is obtained by minimizing
\begin{equation}
	u(t_1) \in \text{arg}\min_{u \in \R^N} G(u) \quad \text{subject to } p_i(t_1) \in \partial |u_i(t_1)|, 
\end{equation}
or in other words
$$ u(t_1) \in \text{arg}\min_{u \in \R^N} G(u) \quad \text{subject to } p_i(t_1) u_i(t_1) \geq |u_i(t_1)|~\forall i. $$ 
The optimality condition for the latter problem can be written as
\begin{equation}
	\partial_{u_i} G(u(t_1)) + \lambda_i( q_i - p_i(t_1)) = 0, \qquad q_i \in \partial |u_i(t_1)|.
\end{equation}
for some $\lambda \in \R^N$ satisfying the complementarity conditions
$$ \lambda_i \geq 0 , \qquad \lambda_i ( p_i(t_1) u_i(t_1) - |u_i(t_1)|)  = 0. $$
This implies $\partial_{u_i} G(u(t_1))  = 0$ of $u_i(t_1) \neq 0$, $\partial_{u_i} G(u(t_1))  \geq  0$ if 
$u_i(t_1) = 0$ and $p_i(t_1) = 1$, and $\partial_{u_i} G(u(t_1))  \leq  0$ if 
$u_i(t_1) = 0$ and $p_i(t_1) = -1$. This implies that we can find a time interval $(t_1,t_2)$ such that 
$$u(t) = u(t_1), \qquad  p(t) = p(t_1) - (t-t_1) \partial G(u(t_1)) $$
is a solution, and $t_2$ is again defined as the minimal time where there exists $i$ such that $|p_i(t_2)|=1$ and $|p_i(t)|<1$. Again, the solution at time $t_2$ is defined by a solution of the variational problem
$$ u(t_2) \in \text{arg}\min_{u \in \R^N} G(u) \quad \text{subject to } p_i(t_2) u_i(t_2) \geq |u_i(t_2)| ~\forall i. $$ 
By an inductive procedure one obtains that the same kind of dynamics goes on for all $t$ until it stops after finite time steps $t_n$ at a minimizer of $G$.

As mentioned above the scale behaviour of the inverse scale space flow is highly attractive in image processing. In the polyhedral case there is a somehow exact decomposition into different scales by the steps made at times $t_k$. Indeed $\partial_t u$ is a sum of concentrated measures in time, and one may eliminate certain scales by leaving out the corresponding jump $u(t_k+\tau)-u(t_k-\tau).$ This observation leads the way to a much a more general definition of filters from the inverse scale space method, which was discussed in \cite{spectraltv}
\begin{equation}
	\partial_t p(t) = f - u(t), \qquad p(t) \in \partial R(u(t)).
\end{equation}
A certain scale filter is defined by
\begin{equation}
	F(f) = u_0 + \int_0^\infty w(t) d\partial_t u(t),
\end{equation}
with measureable weights $w(t) \in [0,1]$. In the case $w\equiv 1$ one simply obtains $f$, while certain scales can be damped out choosing $w(t) = 0$ for $t$ in an appropriate interval. The design of filters for certain purpose is an ongoing subject of research. 


\section{Applications in Partial Differential Equations}  

In the following we provide an overview of different aspects of partial differential equations, where Bregman distances are a useful tool. Unlike the case of inverse problems and image processing discussed above the notion of Bregman distance is not used widely in this field, and indeed most applications do not refer to this term or use it in a very hidden way. Our goal in the following section is to work out the basic ideas related to Bregman distances in a structured way, which sheds new light on many established techniques and hopefully also opens routes towards novel results. For this sake we employ a formal approach and avoid technicalities such as detailed function spaces, which of course can be worked out from existing literature.

\subsection{Entropy Dissipation Methods for Gradient Systems} 

Entropy dissipation methods are a frequently used tool in partial differential equations (cf. \cite{essay,juengelentropie}), which is often based on using the logarithmic entropy
\begin{equation}
	E(u) = \int_\Omega u(x) \log u(x) ~dx
\end{equation}
as a Lyapunov functional, e.g. in diffusion equations (cf. e.g. \cite{arnoldcarlen,essay,arnold,arnoldunterreiter,carrillo}), kinetic equations (cf. e.g. \cite{essay}), or fluid mechanics (cf. e.g. \cite{saintraymond}). In particular in gradient systems also different convex functionals are used regularly and in a structured way. The abstract form of a gradient system is
\begin{equation}
	\partial_t u(t) = - L(u(t)) E'(u(t)), 
\end{equation}
where $L(u)$ is a linear symmetric positive semi-definite operator on appropriate spaces and $E$ a convex energy functional, which we assume differentiable for simplicity (similar treatment for non-differentiable convex functionals is possible by using subgradients, but beyond our scope). The entropy dissipation property can be verified by the straight-forward computation
\begin{equation}
	\frac{d}{dt} E(u(t)) = E'(u(t)) \partial_t u(t) = - \langle E'(u(t)), L(u(t)) E'(u(t)) \rangle \leq 0. 
\end{equation}
The negative of the right-hand side is frequently called entropy dissipation functional $D(u(t))$ and can be used to derive further quantitative information about the decay to equilibrium. 
A standard example (cf. \cite{essay,carrillo}) are nonlinear Fokker-Planck equations of the form
\begin{equation}
	\partial_t u = \nabla \cdot ( m(u) \nabla (e'(u) + V)) 
\end{equation}
on a domain $\Omega \subset \mathbb{R}^d$ with no-flux boundary conditions. Here, $e:\mathbb{R}^+ \rightarrow \mathbb{R}$ is a convex function, $m:\mathbb{R}^+ \rightarrow \mathbb{R}^+$ a (potentially nonlinear) mobility function, and $V: \Omega \rightarrow \mathbb{R}$ an external potential. Recently also systems of Fokker-Planck equations as well as certain reaction-diffusion systems of the form
\begin{equation}
	\partial_t u_i = D_i \Delta u_i + F_i(u_1,\ldots,u_M), \qquad i=1,\ldots,M
\end{equation}
have been investigated with entropy dissipation techniques (cf. \cite{stelzer,markowichfellner,mielke}).

The major purpose of entropy dissipation techniques is to obtain qualitative or ideally quantitative results about the decay to equilibrium for transient solutions. An equilibrium solution $u_\infty$ is a minimizer of $E$ on a convex set $K$, to which also the transient solution $u(t)$ belongs for all $t$. An example is the Fokker-Planck equation with linear mobility $m(u)=u$, where $K$ is the set of nonnegative integrable functions with prescribed mean vaule.
Hence, $u_\infty$ satisfies
\begin{equation}
	E'(u_\infty) (u-u_\infty) \geq 0 \qquad \forall~u \in K.
\end{equation}
If further the operator $L(u)$ is such that 
\begin{equation}
	L(u) E'(u_\infty) = 0 \qquad \forall~u \in K,
\end{equation}
which is indeed the case for the typical examples, 
then one can rewrite the gradient system as
\begin{equation}
	\partial_t u(t) = - L(u(t)) (E'(u(t))-E'(u_\infty)).
\end{equation}
Hence, the right-hand side is expressed in a difference of energy gradients for the transient and equilibrium solution.
In a similar way, the entropy dissipation can be rewritten in terms of a distance between those and the Bregman distance (usually called relative entropy) plays a key role for this purpose. One observes that 
\begin{eqnarray}
	\frac{d}{dt} D_E^{E'(u_\infty)}(u(t),u_\infty) &=& E'(u(t)) \partial_t u(t) - E'(u_\infty) \partial_t u(t) \nonumber \\ &=& - \langle E'(u(t)) - E'(u_\infty) , L(u(t)) ( E'(u(t)) - - E'(u_\infty) )\rangle \nonumber \\ &=:& - F(u(t),u_\infty). \nonumber
\end{eqnarray}
Of course, the above computation holds for smooth solutions only, for weak solutions on can usually derive the time-integrated version
\begin{equation}
	D_E^{E'(u_\infty)}(u(t),u_\infty) + \int_s^t F(u(\tau)~d\tau \leq D_E^{E'(u_\infty)}(u(s),u_\infty).
\end{equation}

The above computation shows that entropy dissipation can be rephrased as the decrease of the Bregman distance between stationary and transient solution. We notice that the use of the Bregman distance is not essential in this computation, but the understanding of this structure can be quite benefitial, in particular if one wants to use dual variables, the so-called entropy variables
\begin{equation}
	\varphi(t) = E'(u(t)), \qquad \varphi_\infty = E'(u_\infty) .
\end{equation}
The entropy variable $\varphi$ solves the system
\begin{equation}
	\partial_t (E'(\varphi(t))= - L((E^*)'(\varphi(t))) \varphi(t) , \label{dualgradientflow}
\end{equation}
where $E^*$ is the convex conjugate of $E$. When analyzing the dual flow \eqref{dualgradientflow} a dissipation property can now be derived immediately using relation \eqref{Bregmandistanceduality}. Thus, we obtain a dual entropy dissipation of the form
\begin{equation}
	\frac{d}{dt} D_{E^*}^{u(t)}(\varphi_\infty,\varphi(t)) = - - \langle \varphi(t) - \varphi_\infty ,  L((E^*)'(\varphi(t))) ( \varphi(t) - \varphi_\infty )\rangle.
\end{equation}
The duality relation is particularly interesting for constructing approximations in terms of the entropy variables, as e.g. carried out for degenerate cross-diffusion systems in (cf. \cite{schlake,juengelbounded,zamponi}).

In order to obtain quantitative estimates for the decay one needs

\subsection{Lyapunov Functionals for Gradient Systems out of Equilibrium}

The appropriate use of Bregman distances seems to be less explored, but maybe even more crucial for the derivation of Lyapunov functionals if gradient systems are perturbed out of equilibrium. The simplest example is the linear Fokker-Planck equation with non-potential force as investigated in \cite{arnoldcarlen}
\begin{equation}
	\partial_t u = \nabla \cdot ( \nabla u + u F) \qquad \text{in } \Omega \times \mathbb{R}^+, \label{Fokkerplancknonsymmetric}
\end{equation}
supplemented by no-flux boundary conditions
\begin{equation}
	(\nabla u + u F)\cdot n = 0 \qquad \text{on } \partial \Omega \times \mathbb{R}^+.
\end{equation}
If the vector field $F$ is not the gradient of some potential function, then a stationary solution cannot be constructed as the minimizer of an entropy functional. However, the existence and uniqueness of a stationary solution can be shown under quite general assumptions on $F$ (cf. \cite{droniou}). In a form similar to gradient flows we write \eqref{Fokkerplancknonsymmetric} as
\begin{equation}
	\partial_t u = \nabla \cdot ( u (\nabla e'(u) + F)), \qquad e(u) = u \log u +1 - u, 
\end{equation}
which suggests to further investigate distances based on the entropy functional
\begin{equation}
	E(u) = \int_\Omega e(u)~dx = \int_\Omega (u \log u - u  +1)~dx.
\end{equation}
The dissipation of the relative entropy can be computed via
\begin{eqnarray*}
 \frac{d}{dt} D_E^{E'(u_\infty)}(u(t),u_\infty) &=& \int_\Omega (e'(u(t))-e'(u_\infty))\partial_t u(t) ~dx \\
&=& \int_\Omega (e'(u(t))-e'(u_\infty))\nabla \cdot u (\nabla (e'(u(t)) - e'(u_\infty)) + \nabla e'(u_\infty) + F) ~dx \\
&=& - \int_\Omega u|\nabla (e'(u(t)) - e'(u_\infty))|^2~dx \\ &&
+ \int_\Omega (e'(u(t))-e'(u_\infty))\nabla \cdot u(t) (\nabla e'(u_\infty) + F)~dx,
\end{eqnarray*}
where we have used the no-flux boundary conditions 
$$ (\nabla e(u(t)) + F) \cdot n = (\nabla e(u_\infty) + F) \cdot n = 0 \qquad \text{on } \partial \Omega \times \mathbb{R}^+
$$
in order to apply integration by parts in the first term on the right-hand side. The second term is simplified via
\begin{eqnarray*}
 \nabla \cdot (u (\nabla e'(u_\infty) + F)) &=& \nabla \cdot (\frac{u(t)}{u_\infty} u_\infty (\nabla e'(u_\infty) + F)) \\
&=& u_\infty  \nabla  (\frac{u(t)}{u_\infty}) \cdot (\nabla e'(u_\infty) + F)) \\
&=& u_\infty  \nabla  \exp(e'(u(t))-e'(u_\infty)) \cdot (\nabla e'(u_\infty) + F)) \\
&=& u_\infty  \exp(e'(u(t))-e'(u_\infty)) \nabla (e'(u(t))-e'(u_\infty)) \cdot (\nabla e'(u_\infty) + F)) .
 \end{eqnarray*}
With $\Psi$ satisfying $\Psi'(z) = z \exp(z)$ we can further write
\begin{eqnarray*}
&&\int_\Omega (e'(u(t))-e'(u_\infty))\nabla \cdot u(t) (\nabla e'(u_\infty) + F)~dx = \\
&& \qquad  \int_\Omega \nabla \Psi(e'(u(t))-e'(u_\infty)) \cdot u_\infty (\nabla e'(u_\infty) + F)~dx = 0, 
 \end{eqnarray*}
which can be seen again through integration by parts. Hence, we finally obtain the decrease of the Bregman distance via
\begin{equation}
	\frac{d}{dt} D_E^{E'(u_\infty)}(u(t),u_\infty) = -  \int_\Omega u|\nabla (e'(u(t)) - e'(u_\infty))|^2~dx ,
\end{equation}
and the logarithmic Sobolev inequality (cf. \cite{arnold}) implies exponential convergence to equilibrium.

Another example are boundary-driven nonlinear Fokker-Planck equation
\begin{equation}
	\partial_t u = \nabla \cdot ( \nabla m(u) + m(u) F) \qquad \text{in } \Omega \times \mathbb{R}^+, \label{eqbodineau}
\end{equation}
considered in \cite{bodineau} with Dirichlet boundary conditions
\begin{equation}
	u =g \qquad \text{on } \partial \Omega \times \mathbb{R}^+.
\end{equation}
We mention that an analogous analysis holds in the case of no-flux boundary conditions (in which case we have a direct generalization of the nonsymmetric Fokker-Planck equation above) or mixed Dirichlet and no-flux boundary conditions. 
Bodineau et al \cite{bodineau} construct Lyapunov functionals of the form
\begin{equation}
	H(u,u_\infty) = \int_\Omega \int_{u_\infty(x)}^{u(x,t)} \Phi'\left(\frac{m(s)}{m(u_\infty(x))}\right)~ds~dx,
\end{equation}
where $\Phi$ is a nonnegative function with unique minimum at zero. Such a construction seems far from being intuitive, but it becomes much more clear for $\Phi$ being the logarithmic entropy, i.e. $\Phi'(t) = \log t$. In this case the Lyapunov functional becomes
\begin{equation}
	H(u,u_\infty) = \int_\Omega \int_{u_\infty(x)}^{u(x,t)}\log m(s)- \log m(u_\infty(x))~ds~dx,
\end{equation}
and with a function $e$ such that $e'(s) = \log m(s)$ we further obtain
\begin{equation}
	H(u,u_\infty) = \int_\Omega (e(u(x,t)) - e(u_\infty(x))- e'(u_\infty(x)) (u(x,t) - u_\infty(x)))~ds~dx,
\end{equation}
which is nothing but the Bregman distance for the entropy functional
\begin{equation}
	E(u) = \int_\Omega e(u)~dx, \qquad \text{with } e'(u) = \log m(u).
\end{equation}
Since equation \eqref{eqbodineau} can be written as 
\begin{equation}
	\partial_t u = \nabla \cdot ( m(u) (\nabla \log m(u) +  F)), \qquad \text{in } \Omega \times \mathbb{R}^+,
\end{equation}
the above form of $E$ is also a natural choice. The detailed computations for the entropy dissipation are indeed completely analogous to the case of the linear Fokker-Planck equation, the crucial point appears to be the logarithmic relation between entropy derivatives $e'(u)$ and mobilities $m(u)$.

\subsection{Doubly Nonlinear Evolution Equations}

A generalization of gradient systems are doubly nonlinear evolution equations with a gradient structure either of the form 
\begin{equation}
	\partial_t p(t)  \in - \partial G(u(t)), \qquad p(t) \in \partial F(u(t))  \label{eq:gradstructure}
\end{equation}
or as 
\begin{equation}
	\partial F(\partial_t u) + \partial G(u(t)) \ni 0 \label{eq:gradstructure2}.
\end{equation}
The best studied case, which is also the one where both coincide, corresponds to $F(u) = \frac{1}2 \Vert u \Vert^2$ for a norm in a Hilbert space, which yields the classical gradient flow 
\begin{equation}
	\partial_t u(t)  \in - \partial G(u(t)). \label{eq:gradflow}
\end{equation}
We have seen a system in the form \eqref{eq:gradstructure} already above in the inverse scale space method, while the form \eqref{eq:gradstructure2} appears frequently in mechanical problems (cf. e.g. \cite{mielkerossi} and references therein)
There is indeed a duality relation  for \eqref{eq:gradstructure} and 
\eqref{eq:gradstructure2}. Starting from \eqref{eq:gradstructure} we obtain 
$u(t) \in \partial G^*(-\partial_t p(t)) \cap \partial F^*(p(t))$, respectively $- u(t) \in \partial G^*(\partial_t p(t)) $ if $G$ satisfies a symmetry condition around zero. This yields 
$$\partial G^*(\partial_t p(t)) + \partial F^*(p(t)) \ne 0, $$
the analogue of \eqref{eq:gradstructure2}.

Doubly nonlinear evolution equations have recently been investigated extensively, and in particular tools from convex analysis have been employed (cf. \cite{mielkerossi}). Here we add our Bregman distance point of view to derive estimates for such equations.
Let us start with a straightforward computation on the change of the time derivative of the Bregman distance:
\begin{lemma}
Let $F$ be differentiable and $u$   a solution of \eqref{eq:gradstructure}. Then 
$$ \frac{d}{dt} D_F^{p(t)}(v,u(t)) = - \langle \partial_t p(t), v -u(t)\rangle
 \leq G(v) - G(u(t)). $$
\end{lemma}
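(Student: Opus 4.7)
The plan is to differentiate the Bregman distance directly from its definition and then apply the subgradient inequality for $G$ at $u(t)$.

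First I would expand
\begin{equation*}
D_F^{p(t)}(v,u(t)) = F(v) - F(u(t)) - \langle p(t), v - u(t) \rangle
\end{equation*}
and differentiate in $t$, holding $v$ fixed. The term $F(v)$ is constant in $t$, so this produces
\begin{equation*}
\frac{d}{dt} D_F^{p(t)}(v,u(t)) = - \langle F'(u(t)), \partial_t u(t) \rangle - \langle \partial_t p(t), v - u(t)\rangle + \langle p(t), \partial_t u(t)\rangle.
\end{equation*}
Because $F$ is assumed differentiable, the subgradient $p(t) \in \partial F(u(t))$ equals $F'(u(t))$, so the first and third terms cancel. This yields the asserted equality
\begin{equation*}
\frac{d}{dt} D_F^{p(t)}(v,u(t)) = - \langle \partial_t p(t), v - u(t)\rangle.
\end{equation*}

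For the inequality, I would use the gradient-structure assumption \eqref{eq:gradstructure}, namely $-\partial_t p(t) \in \partial G(u(t))$. By the very definition of the subdifferential applied to $G$ at $u(t)$ with test point $v$,
\begin{equation*}
G(v) \geq G(u(t)) + \langle -\partial_t p(t), v - u(t)\rangle,
\end{equation*}
which rearranges to $-\langle \partial_t p(t), v - u(t)\rangle \leq G(v) - G(u(t))$, giving the claim.

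I do not expect a genuine obstacle here: the differentiability of $F$ makes the subgradient a singleton and is precisely what makes the chain-rule cancellation go through cleanly. The only subtle point, which I would flag but not elaborate, is the justification of differentiating $t \mapsto F(u(t))$ by the chain rule and of moving $\partial_t$ inside the duality pairing $\langle p(t), v-u(t)\rangle$; in the formal setting of the paper this is routine, but in a more careful treatment one would either assume sufficient regularity of $t \mapsto u(t)$ and $t \mapsto p(t)$, or pass to an integrated version of the identity in the spirit of the time-integrated entropy dissipation estimates derived earlier in the paper.
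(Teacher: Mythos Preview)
Your proof is correct and is precisely the straightforward computation the paper has in mind; the paper in fact states the lemma without proof, merely calling it ``a straightforward computation,'' and your expansion-plus-cancellation via $p(t)=F'(u(t))$ followed by the subgradient inequality for $G$ is exactly that computation. Your remark about the formal use of the chain rule is also in line with the paper's explicitly formal approach in this section.
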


This can be used to quantify the distance of $u(t)$ to a minimizer of $G$:
\begin{cor}
Let $F$ be differentiable, $u_\infty$ a minimizer of $G$, and $u$ a solution of \eqref{eq:gradstructure}. Then 
\begin{equation}
	\frac{d}{dt} D_F^{p(t)}(u_\infty,u(t)) + D_G^0(u(t),u_\infty) \leq 0 . \label{dFestimate}
\end{equation}
\end{cor}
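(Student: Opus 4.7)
The plan is to apply the preceding lemma with the test vector $v$ chosen to be the minimizer $u_\infty$, and then to rewrite the resulting inequality using the definition of the Bregman distance associated with $G$ at the subgradient $0$.

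First I would observe that since $u_\infty$ minimizes $G$, the optimality condition yields $0 \in \partial G(u_\infty)$. This is exactly the statement that $0$ is an admissible subgradient for evaluating $D_G^{0}(\cdot, u_\infty)$, so the quantity on the left of \eqref{dFestimate} is well-defined. Moreover, unfolding the definition,
\begin{equation*}
D_G^{0}(u(t), u_\infty) = G(u(t)) - G(u_\infty) - \langle 0, u(t) - u_\infty \rangle = G(u(t)) - G(u_\infty).
\end{equation*}

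Next I would apply the lemma with $v = u_\infty$, which gives directly
\begin{equation*}
\frac{d}{dt} D_F^{p(t)}(u_\infty, u(t)) \leq G(u_\infty) - G(u(t)) = - D_G^{0}(u(t), u_\infty),
\end{equation*}
and rearranging yields exactly \eqref{dFestimate}. The proof is essentially a one-line specialization of the lemma plus the identification $G(u(t)) - G(u_\infty) = D_G^{0}(u(t), u_\infty)$.

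There is no real obstacle here; the only point that requires a small check is that $u_\infty$ being a minimizer indeed gives $0 \in \partial G(u_\infty)$, which is immediate from the subgradient inequality $G(v) \geq G(u_\infty) + \langle 0, v - u_\infty\rangle$ for all $v$. Both terms $D_F^{p(t)}(u_\infty,u(t))$ and $D_G^{0}(u(t),u_\infty)$ are nonnegative by the earlier proposition, so the corollary in particular implies that the Bregman distance $D_F^{p(t)}(u_\infty, u(t))$ is monotonically decreasing in $t$, with decay rate controlled by the $G$-excess $G(u(t)) - G(u_\infty)$.
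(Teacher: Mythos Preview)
Your proof is correct and is exactly the intended derivation: the paper states the corollary immediately after the lemma without giving a separate proof, and the only step needed is precisely your specialization $v = u_\infty$ together with the identification $D_G^{0}(u(t),u_\infty) = G(u(t)) - G(u_\infty)$, which follows from $0 \in \partial G(u_\infty)$.
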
 

Since it is straightforward to see
\begin{equation}
	\frac{d}{dt} D_G^0(u(t),u_\infty) = \frac{d}{dt} G(u(t)) \leq 
\end{equation}
we see after integrating \eqref{dFestimate} in time
\begin{equation}
	D_F^{p(t)}(u_\infty,u(t)) + t  D_G^0(u(t),u_\infty)  \leq D_F^{p(t)}(u_\infty,u(t)) + \int_0^t  D_G^0(u(s),u_\infty) ~ds \leq D_F^{p(0)}(u_\infty,u(0)), 
\end{equation}
leading to linear decay of the Bregman distance:

\begin{thm}
Let $F$ be differentiable, $u_\infty$ a minimizer of $G$, and $u$ a solution of \eqref{eq:gradstructure}. Then 
\begin{equation}
	D_G^0(u(t),u_\infty) \leq \frac{1}t D_F^{p(0)}(u_\infty,u(0)) . \ 
\end{equation}
\end{thm}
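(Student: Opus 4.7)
The plan is to obtain the estimate as an almost immediate consequence of the preceding corollary, once I verify the monotonicity of $t \mapsto G(u(t))$ along the flow. The corollary already supplies
\[
\frac{d}{dt} D_F^{p(t)}(u_\infty, u(t)) + D_G^0(u(t), u_\infty) \leq 0,
\]
and because $0 \in \partial G(u_\infty)$, one has $D_G^0(u(t), u_\infty) = G(u(t)) - G(u_\infty)$. Assuming this quantity is non-increasing in $t$, integrating the inequality on $[0,t]$ and discarding the nonnegative Bregman term on the left will yield the claim.

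First I would fill in the monotonicity step that the excerpt leaves cut off. Since $F$ is differentiable, $p(t) = F'(u(t))$, so formally $\partial_t p(t) = F''(u(t)) \partial_t u(t)$. Combining this with $-\partial_t p(t) \in \partial G(u(t))$ from \eqref{eq:gradstructure} and the chain rule for convex functionals gives
\[
\frac{d}{dt} G(u(t)) = \langle -\partial_t p(t), \partial_t u(t) \rangle = -\langle F''(u(t)) \partial_t u(t), \partial_t u(t) \rangle \leq 0,
\]
the last inequality by convexity of $F$. In particular, $s \mapsto D_G^0(u(s), u_\infty)$ is non-increasing.

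Next I would integrate the corollary's differential inequality from $0$ to $t$, obtaining
\[
D_F^{p(t)}(u_\infty, u(t)) + \int_0^t D_G^0(u(s), u_\infty)\, ds \leq D_F^{p(0)}(u_\infty, u(0)).
\]
The monotonicity just established gives $D_G^0(u(s), u_\infty) \geq D_G^0(u(t), u_\infty)$ for every $s \in [0,t]$, so the time integral is bounded below by $t\, D_G^0(u(t), u_\infty)$. Discarding the nonnegative first term on the left and dividing by $t$ then produces the claimed $1/t$ bound.

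The only genuine obstacle is a rigorous treatment of the chain-rule computation in the monotonicity step when $G$ is not differentiable and $u(\cdot)$ is only a weak solution: the symbol $F''(u(t)) \partial_t u(t)$ must be interpreted correctly, and the choice of subgradient $-\partial_t p(t) \in \partial G(u(t))$ must be compatible with $\tfrac{d}{dt} G(u(t))$. In the classical smooth setting this is transparent; in general one would either impose extra regularity of $t \mapsto u(t)$, pass through a Moreau--Yosida regularization, or establish the monotonicity indirectly by applying the lemma with $v = u(s)$ for $s < t$ and letting $s \to t^-$. Once monotonicity of $G(u(t))$ is secured, the rest is a purely formal integration argument.
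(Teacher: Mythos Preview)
Your proposal is correct and follows essentially the same route as the paper: integrate the corollary's differential inequality, use the monotonicity $\frac{d}{dt}G(u(t))\le 0$ to replace the time integral by $t\,D_G^0(u(t),u_\infty)$, and drop the nonnegative $D_F^{p(t)}(u_\infty,u(t))$ term. The paper simply asserts the monotonicity as ``straightforward,'' whereas you spell out the $F''$-based chain-rule computation and flag the regularity caveats; since the paper explicitly works at a formal level, your added rigor discussion is appropriate but not a deviation in strategy.
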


\subsection{Error Estimates for Nonlinear Elliptic Problems}

We finally turn our attention to the analysis of discretization methods for nonlinear elliptic problems such as the $p$-Laplace equation. Such elliptic problems are optimality conditions of some energy functional of the form
\begin{equation}
	E(u) = J(u) - \langle f, u \rangle,
\end{equation}
where $J$ is a convex functional on a Banach space $X$, typically a Sobolev space of first order derivatives. The elliptic differential equation (or more general differential inclusion) is the optimality condition
\begin{equation}
	p = f, \qquad p \in \partial J(u)
\end{equation}
A canonical example is the $p$-Laplace equation
\begin{equation}
	-\nabla \cdot \left( |\nabla u|^{p-2} \nabla u \right) = f,
\end{equation}
which is related to the functional
\begin{equation}
 J(u) =	\frac{1}p \int_\Omega |\nabla u(x)|^p~dx .
\end{equation}

For variational discretizations of such problems the Bregman distance appears to be a quite useful tool, which is still not fully exploited. In many approaches the Bregman distance is used in a hidden way and strict convexity is used to obtain an estimate in terms of the underlying norms (with potentially suboptimal constants however). For the $p$-Laplace equation such an approach is carried out in \cite{dieningkreuzer}. Again in the limiting case $p=1$ related to total variation minimization the Bregman distance is even more crucial and appears e.g. in \cite{bartels}. Here we briefly sketch the obvious role of Bregman distances in Galerkin discretizations of the form
\begin{equation}
	E(u) \rightarrow \min_{u \in X_h}, 
\end{equation}
where $X_h$ is a finite-dimensional subspace of $X$, e.g. constructed by finite elements. 

Let us start by pointing out the basic structure of error estimates for Galerkin methods in the linear, case related to the minimization of a positive definite quadratic form
\begin{equation}
	J(u) = B(u,u),
\end{equation}
where $B: X \times X \rightarrow \mathbb{R}$ is a bounded and coercive bilinear form. The optimality condition in weak form is given by
\begin{equation}
	B(u,v) = \langle f, v \rangle \qquad \forall~v \in X,
\end{equation}
and the Galerkin discretization yields a solution $u_h \in X_h$ of 
\begin{equation}
	B(u_h,v) = \langle f, v \rangle \qquad \forall~v \in X_h.
\end{equation}
Error estimates for such discretizations are obtained in two steps: first the error between $u$ and $u_h$ is estimated by the projection error to the subspace $X_h$ and then the projection error is estimated, e.g. via the interpolation error. The crucial property for the first step is the so-called {\em Galerkin orthogonality} 
\begin{equation}
	B(u-u_h, v ) = 0 \qquad \forall~v \in X_h,
\end{equation}
which implies
\begin{equation}
	B(u-u_h, u-u_h ) = B(u-u_h, u-v ) \qquad \forall~v \in X_h,
\end{equation}
and by the Cauchy-Schwarz inequality for the positive definite bilinear form $B$
\begin{equation}
	B(u-u_h, u-u_h ) \leq B(u-v, u-v ) \qquad \forall~v \in X_h.
\end{equation}
In other words $u_h$ is the projection of $u$ on the subspace $X_h$, when the (squared) norm induced by $B$ is used as a distance measure.

Since the term $B(u-v,u-v)$ above is just the Bregman distance related to quadratic functional $J$ one might think of an analogous property in the case of nonquadratic $J$, when the Bregman projection is used. Indeed, we can derive such a relation in the case of arbitrary convex $J$. For this sake let again $u$ be a minimizer of $E$ and $u_h$ a minimizer of $E$ constrained to the subspace $X_h$. Then we have $f \in \partial J(u)$ and thus, since $u_h$ minimizes $E$ on $X_h$, we have for all $v \in X_h$
\begin{eqnarray*}
D_J^f(u_h,u) &=& J(u_h) - J(u) - \langle f, u_h - u \rangle \\ &=& E(u_h) - J(u) + \langle f, u\rangle \\
&\leq& E(v) - J(u) + \langle f, u\rangle.
\end{eqnarray*}
Rewriting the last term we hence obtain the Bregman projection property
\begin{equation}
	D_J^f(u_h,u) \leq D_J^f(v,u), \qquad \forall~v \in X_h.
\end{equation}
This observation opens a way to analyze Galerkin methods for such nonlinear problems in the same way as in the linear case, the key step to be developed for specific problems and specific discretizations ($X_h$) is the estimation of the Bregman projection error. 

Note again the role of the Bregman distance for error estimation: The one-sided distance $D_J^f(u_h,u)$ is particularly suitable for the estimation of a-priori errors as above, while a-posteriori error estimation should rather be based on the distance $D_J^{p_h}(u,u_h)$ with $p_h \in \partial J(u_h)$. We have by the minimizing property of $u$
\begin{eqnarray*}
D_J^{p_h}(u,u_h) &=& J(u) - J(u_h) - \langle p_h, u-u_h \rangle \\ &=& E(u) - E(u_h) + \langle p_h-f, u_h - u \rangle \\
&\leq& \langle p_h-f, u_h - u \rangle .
\end{eqnarray*}
Using the duality relation $u \in \partial J^*(f)$, this could be further estimated to the full a-posteriori estimate
\begin{equation}
	D_J^{p_h}(u,u_h) \leq \langle p_h-f, u_h \rangle + J^*(2 f -p_h)- J^*(f) .
\end{equation}
For practical purposes the above abstract estimate is not useful in most cases, since computing the adjoint $J^*$ means to solve a nonlinear partial differential equation as well, which might be as difficult as the original one. However, the general strategy can be exploited together with specific properties of the functional $J$ and the subspace $X_h$. In particular for gradient energies of the form
\begin{equation}
	J(u) = \int_\Omega j(\nabla u)~dx
\end{equation}
one can derive alternative versions using only the convex conjugate $j^*$, which is significantly easier to compute.

\section{Further Developments}

In this final section we discuss some aspects of Bregman distances that came up recently and will potentially have strong further impact, in particular we will explore some developments related to probability.

\subsection{Uncertainty Quantification in Inverse Problems}

Since Bregman distances appear to be a suitable tool for estimates in certain nonlinear deterministic problems, it seems natural to exploit them also in the  stochastic counterparts of such problems. The obvious measure for error estimates is then the expected value of the Bregman distance with respect to the stochastic quantity. Such approaches have been used successfully in particular in statistical inverse problems (cf. e.g. \cite{wernerhohage}), which we also want to discuss in the following. In order to avoid technicalities we restrict ourselves to a purely finite-dimensional setup.

Consider the inverse problem $Ku = f$, where $K: \mathbb{R}^N \rightarrow \mathbb{R}^M$ and the data are generated from a true solution $u^*$ with additive Gaussian noise, i.e.
\begin{equation}
	f = Ku^* + \sigma n,
\end{equation}
with $n$ a Gaussian random variable with zero mean and covariance matrix $I_M$. Let again $R$ be a convex regularization functional and $u_\alpha$ a solution of the variational problem
\begin{equation}
 J(u) =	\frac{1}{2 \sigma^2} \Vert (Ku-f)\Vert^2 +  \alpha J(u) \rightarrow \min_{u \in \mathbb{R}^N}. 
\end{equation}
Then $u_\alpha$ satisfies the optimality condition
\begin{equation}
	\frac{1}{ \sigma^2} K^*  K (u_\alpha - u^*) + \alpha p_\alpha = \frac{1}{ \sigma^2} K^* n, \qquad p_\alpha \in \partial R(u_\alpha), \label{uqequation1}
\end{equation}
which implies $p_\alpha = K^*w_\alpha$. Now assume $u^*$ satisfies the source condition \eqref{sourcecondition} then we have
$$ 
  K (u_\alpha - u^*) + \alpha \sigma^2  ( w_\alpha -w ^*) =  n - \alpha \sigma^2 w^*.
$$
Taking the squared norm and subsequently expection with respect to $w$ in this identity we obtain
\begin{eqnarray*} 
 2\alpha \sigma^2 E[D_R^{p_\alpha,p^*}(u_\alpha,u^*)] &\leq&
  E[\Vert K (u_\alpha - u^*) \Vert^2 + 2\alpha \sigma^2 D_R^{p_\alpha,p^*}(u_\alpha,u^*) + \alpha^2 \sigma^4 \Vert w_\alpha -w ^* \Vert]^2  \\ &=&  E[\Vert n - \alpha \sigma^2 w^* \Vert^2] \\
	&=& E[\Vert n \Vert^2] +   \alpha^2 \sigma^4 \Vert w^* \Vert^2 = \sigma^2 M +  \alpha^2 \sigma^4 \Vert w^* \Vert^2  
\end{eqnarray*}
Thus, the expected error in the Bregman distance is estimated by
\begin{equation}
	 E[D_R^{p_\alpha,p^*}(u_\alpha,u^*)]  \leq \frac{M}{2\alpha} + \frac{\alpha \sigma^2}2 \Vert w^* \Vert^2  . 
\end{equation}
We notice that the above approach not only yields an estimate of the Bregman distance, but indeed an exact value for the sum of three error measures, in addition to the Bregman distance also the residual error as well as the error in the source space (related to $w_\alpha - w^*$). Usually the latter is the largest of the three, so one needs to expect a blow up of this term as $M\rightarrow \infty$ if $\alpha$ is not increasing as $M$. If one is interested in the first two terms only, one can simply use a duality product with $u_\alpha - u^*$ in \eqref{uqequation1} and subsequently estimate the expected value of the right-hand side in a different way, which may lead to robust estimates in terms of $M$ respectively estimates that can be carried out for infinite-dimensional white noise. 

An application of Bregman distances in Bayesian modelling was recently investigated in \cite{BL14}, considering frequently used posterior densities of the form
\begin{equation}
	\pi(u|f) \sim e^{-\frac{\Vert Ku-f\Vert^2}{2\sigma^2} - \alpha R(u)}, 
\end{equation}
where again $R$ is a convex and Lipschitz continuous functional on $\R^N$ (generalizations to posterior distributions in infinite-dimensional spaces where further studied in \cite{helin}). It has been shown that the posterior can be centered around the so-called maximum a-posteriori probability (MAP) estimate $\hat u$, which maximizes $p(u|f)$, in the form
\begin{equation}
	\pi(u|f) \sim e^{-\frac{\Vert Ku-K\hat u\Vert^2}{2\sigma^2} - \alpha D_R^{\hat p}(u,\hat u)}.
\end{equation}
Based on the observation 
\begin{equation}
	 \langle s, u - \hat u \rangle = \frac{\Vert Ku-K\hat u\Vert^2}{\sigma^2} + \alpha \langle p -\hat p, u- \hat u \rangle.
\end{equation}
for $p\in \partial R(u)$ and
$$s = \frac{1}{\sigma^2}K^*(Ku-f) + \alpha p \in \partial ( - \log \pi(u|f)), $$
a Bayes cost of the form
\begin{equation}
\Gamma(v) = 	{\bf E}_{p(u|f)} \left[ \frac{\Vert Ku-Kv\Vert^2}{\sigma^2} + \alpha \langle q - \hat p, v- \hat u \rangle \right]
\end{equation}
has been introduced for $ q \in \partial R(v)$ (note that selection of $p \in R(u)$ is only needed on a set of zero measure due to Rademacher's theorem). A simple integration by parts argument then shows that the MAP-estimate $\hat u$ is a minimizer of the Bayes cost, which is a quite natural choice compared to the highly degenerate cost usually used to characterize MAP estimates (cf. \cite{kaipio}). A direct consequence is the fact that the MAP estimate has smaller Bregman distance in expectation than the frequently used conditional mean estimate, hence one obtains a theoretical argument explaining the success of MAP estimates in practice.

\subsection{Bregman Distances and Optimal Transport}

Bregman distances can be used also as a cost in optimal transport, which has been investigated in \cite{CarlierJimenez} for a convex and differentiable functional $J$ on $\R^N$. Given two probability measures $\mu$ and $\nu$, an optimal transport plan is a probability measure $\gamma$ on $\R^N \times \R^N$ with marginals $\mu$ and $\nu$ minimizing the functional
\begin{equation}
F(\gamma) =	\int_{\R^N \times R^N} D_J^{J'(u)}(v,u) ~d\gamma(v,u).
\end{equation}
The resulting optimal value of $F$ can be interpreted as a transport distance between the measures $\mu$ and $\nu$. 

Besides the important question of well-posedness solved in (cf. \cite{CarlierJimenez}) there are several interesting problems such as the existence of transport maps under certain condition (i.e. concentration of $\gamma$ on a set described by the graph of a map $T: \R^N \rightarrow \R^N$) as well as relations to uncertainty quantification. A first example is the Bayes cost approach described in the previous section, which can indeed be interpreted as the transport distance between the posterior distribution and a measure concentrated at the MAP estimate. This motivates further research in the future, an obvious next step might be to estimate distances between different posterior distributions in transport distances related to Bregman distances. 

A different use of Bregman distances in optimal transport was recently made in \cite{BenamouCarlier} for the solution of Monge-Kantorovich formulations in optimal transport. They consider entropic regularizations of the problem, i.e. for $\epsilon > 0$ they minimize a discrete version of
\begin{equation}
F_\epsilon(\gamma) =	\int_{\R^N \times R^N} C(v,u) ~d\gamma(v,u) + \epsilon E(\gamma),
\end{equation}
where $E$ is the entropy
\begin{equation}
	E(\gamma) = \int_{\R^N \times R^N} \log\left(\frac{d\gamma}{d{\cal L}}\right) ~d\gamma(v,u), 
\end{equation}
where $\frac{d\gamma}{d{\cal L}}$ is the Radon-Nikodym derivative with respect to the Lebesgue measure. The key observation is that the minimization of $F_\epsilon$ can be rewritten equivalently as the minimization of the Kullback-Leibler divergence, i.e. the Bregman distance related to $E$, between $\gamma$ and the Gibbs measure $\varphi_\epsilon$ with density $e^{-C/\epsilon}$
\begin{equation}
	D_E(\gamma,\varphi_\epsilon) \rightarrow \min_\gamma,
\end{equation}
which transforms the problem into a Bregman projection problem of the Gibbs density onto the set of plans with given marginals, which can be computed much more efficiently than the original transport control problem. Note that the general procedure can be carried out as well with an arbitrary convex functional whose domain are positive densities, the corresponding Gibbs density is then to be defined as
$\varphi_\epsilon =(E^*)'(-C/\epsilon)$. A particular computational advantage of the logarithmic entropy is the fact that iterative Bregman projections can be computed explicitely and realized with low complexity, in the discrete sets it only needs multiplications and scalar products of diagonal matrices with the matrix discretizing the Gibbs measure (cf. \cite{BenamouCarlier} for further details).

\subsection{Infimal Convolution of Bregman Distances}

Infimal convolution of convex functionals become popular recently in image processing in order to combine favourable properties of certain regularization functionals, e.g. total variation and higher-order versions thereof (REFs). A quite unexplored topic is the infimal convolution of Bregman distances however. Since they are convex functionals of the first variable one may consider the infimal convolution
\begin{equation}
	[D_R^{p_1}(\cdot,u_1) \square D_R^{p_2}(\cdot,u_2)](u) = \inf_{v \in X} [D_R^{p_1}(u-v,u_1) + D_R^{p_2}(v,u_2)],
\end{equation}
with an obvious extension to more than two values.

Of particular interest in imaging applications appears to be the case of $p_2=-p_1$ and $u_2 = - u_1$ for a one-homogeneous functional such as total variation. The latter was used to obtain a regularization functional enforcing partly equal edge sets (REF colorbregman). While minimizing the Bregman distance for total variation strongly favours edge sets with jumps of equal sign (see also the discussion related to orientation for one-homogeneous functionals in Section \ref{onehomogeneoussection}), the infimal convolution of Bregman distances eliminates this part and hence measures differences in edge sets rather than jumps of the same sign. A further study of theoretical properties as well as applications of such kind of infimal convolution of Bregman distances remains an interesting property for future research. One obvious candidate are problems in compressed sensing where one is first of all aims at obtaining the correct support of the solution rather than the sign.

\section*{Acknowledgements}

This work was partially supported by ERC via Grant EU FP 7 - ERC Consolidator
Grant 615216 LifeInverse and by the German Science Foundation DFG
via BU 2327/6-1 and EXC 1003 Cells in Motion Cluster of Excellence, M\"unster,
Germany

\end{document}